\documentclass{alggeom}

\usepackage{mathrsfs}
\usepackage{amscd}
\usepackage{amsmath}
\usepackage{amssymb}
\usepackage{latexsym}
\usepackage{comment}
\usepackage{amscd}
\usepackage{wasysym}  
\usepackage{tikz}
\usetikzlibrary{matrix,arrows}
\usepackage{tikz-cd}
\usepackage{appendix}
\usepackage[nice]{nicefrac}
\usepackage{fix-cm} 
\usepackage{comment}
\usepackage{amscd}
\usepackage{wasysym}  
\usepackage{tikz}
\usetikzlibrary{matrix,arrows}
\usepackage{tikz-cd}
\usepackage{appendix}
\usepackage[nice]{nicefrac}

\usepackage{marvosym}
\usepackage{tikz}
\usepackage[all,cmtip]{xy}
\usetikzlibrary{matrix,arrows}

\usepackage[pagebackref,hyperindex,linktocpage=true]{hyperref}
\hypersetup{
    colorlinks,
    linkcolor={blue!20!black},
    citecolor={blue!20!black},
    urlcolor={blue!20!black}
}

\usepackage{color}

\usepackage{MnSymbol}
\usetikzlibrary{decorations.markings}

\makeatletter
\tikzcdset{
  open/.code     = {\tikzcdset{hook, circled};},
  closed/.code   = {\tikzcdset{hook, slashed};},
  open'/.code    = {\tikzcdset{hook', circled};},
  closed'/.code  = {\tikzcdset{hook', slashed};},
  circled/.code  = {\tikzcdset{markwith = {\draw (0,0) circle (.375ex);}};},
  slashed/.code  = {\tikzcdset{markwith = {\draw[-] (-.4ex,-.4ex) -- (.4ex,.4ex);}};},
  markwith/.code ={
    \pgfutil@ifundefined%
    {tikz@library@decorations.markings@loaded}%
    {\pgfutil@packageerror{tikz-cd}{You need to say %
      \string\usetikzlibrary{decorations.markings} to use arrows with markings}{}}{}%
    \pgfkeysalso{/tikz/postaction = {
      /tikz/decorate,
      /tikz/decoration={markings, mark = at position 0.5 with {#1}}}
    }
  },
}
\makeatother

\renewcommand{\geq}{\geqslant}
\renewcommand{\leq}{\leqslant}

\newtheorem{theo}{Theorem}[section]
\newtheorem{defi}[theo]{Definition}
\newtheorem{exam}[theo]{Example}
\newtheorem{prop}[theo]{Proposition}

\newtheorem{lemm}[theo]{Lemma}

\newtheorem{coro}[theo]{Corollary}
\newtheorem{rema}[theo]{Remark}
\newtheorem{conj}[theo]{Conjecture}

\newcommand{\N}{\mathbb{N}}

\newcommand{\I}{\mathbb{I}}
\newcommand{\pp}{\mathbb{P}}

\title[Conjecture about prime numbers]{Conjecture: the set of prime numbers is supernatural}

\begin{document}

\author{Arnaud Mayeux}
\email{arnaud.mayeux@mail.huji.ac.il}
\address{Einstein Institute of Mathematics,
The Hebrew University of Jerusalem,
 Jerusalem, 9190401, Israel}

\classification{11A41,11B99}
\keywords{prime numbers, natural functions, supernatural sets, Fermat numbers, natural numbers, conjecture on prime numbers, conjecture on natural numbers}

\begin{abstract}
 Prime numbers are fascinating by the way they appear in the set of natural numbers. Despite several results enlighting us about their repartition, the set of prime numbers is often informally qualified as misterious. In the present paper, we introduce a formalism allowing to state a formal conjecture: the set of prime numbers is supernatural. Our conjecture has no analog in the existing literature. This conjecture is expected to be a hard challenge for any kind of intelligence. We also find a Fermat-like function giving more prime numbers than Fermat's function.  
\end{abstract}

\maketitle

\tableofcontents

\section*{Introduction}
 \begin{sloppypar}
Let $\pp$ be the set of prime numbers.
Fermat  \cite{fermat} conjectured that the set  $\{ 2^{2^n}+1 \mid n \text{ positive integers} \} $ consists entirely of prime numbers. Euler \cite{euler1, euler2} disproved this conjecture showing that $2^{2^5}+1$ is composite. Let $\mathbb{I}$ be the set of positive non-zero integers. A function $f : \I \to \I $ with variable $n$ is a natural function if it is constructed from formal (and coherent) combinations of the symbols $n$, constants $ a \in \I$, and the operations $+ , \times , \wedge$. Here $\wedge$ means elevation to the power. We prove that every natural function is constant or strictly increasing. We then conjecture that for every non-constant natural function $f$, we have $f (\I ) \not \subset \pp$.  This is our conjecture. Evidences for this conjecture are intuition about the irregularity of the set of prime numbers, and that it is verified for all natural functions that we have tested. \end{sloppypar}

 In this text we formally introduce notation and definitions. Then we state our conjecture. Our conjecture was formulated in 2013. We explain that this conjecture can be reformulated as in the title of the article: $\pp$ is supernatural. Proposition \ref{infini} shows that if the conjecture is true then $f(\I) \cap (\I \setminus \pp )$ is an infinite set for all non-constant natural function $f$. In particular, this "canonical" conjecture implies the existence of infinite composite Fermat numbers (an open question today), cf. Corollary \ref{Fermatcor}. Proposition \ref{incredi} explains that if the conjecture is wrong, this would provide a natural function so incredible that we can not believe it. We finish the paper by experiments on the conjecture. One of our conclusions is that Fermat numbers do not have something very special, for example numbers of the forms $2^{2^n}+93$ provide prime numbers for $1 \leq n <7$.

\section{Notation and definition}
\begin{sloppypar}
Let $\N = \{0,1,2,3,4,5,6,7, \ldots \} $ be the set of positive integers.
Let $\I=\{1,2,3,4,5,6,7, \ldots\} $ be the set of non-zero positive integers.  Let $ \pp =\{2,3,5,7,11,13,17,\ldots\} $ be the set of prime numbers. Let $\mathscr{F}$ be the set of functions from $\I$ to $\I$. \end{sloppypar}

\begin{defi}An elevation structure is a $4$-uple $(E,+,\times , \wedge )$ where $E$ is a set and \begin{enumerate}
\item $+$ is a map $E \times E \to E$ sending $a,b$ to $a+b$,
\item $\times $ is a map $E \times E \to E$ sending $a,b$ to $a \times b$,
\item $\wedge $ is a map $E \times E \to E $ sending $a,b $ to $a \wedge b $,
\end{enumerate}
such that $\forall ~ a,b,c \in E$ the following hold 
(i) $a+b=b+a$,
(ii) $a+(b+c)=(a+b)+c$,
(iii) $b \times a = a \times b $,
(iv) $a\times (b \times c ) = (a \times b ) \times c $,
(v) $ a \times (b+c) = a \times b + a \times c $,
(vi) $(a \wedge b ) \times (a \wedge c) = a \wedge ({b+c})$,
(vii) $(a \wedge b ) \wedge c = a \wedge ( b \times c )$.
 We write $a_1 \wedge a_2 \wedge \ldots  \wedge a_n $ instead of $a_1 \wedge (a_2 \wedge (\ldots \wedge a_n))$, be careful that $a \wedge ( b \wedge c ) \ne ( a \wedge b ) \wedge c $ in general. We  write often $a_1^{a_2^{\udots ^{a_n}}}$ instead of $a_1 \wedge a_2 \wedge \ldots  \wedge a_n $. For example $a^{b^c}$ means $a \wedge (b \wedge c )$.

 A morphism of elevation structures from $(E,+, \times , \wedge ) $ to $(F ,+, \times , \wedge )$ is a map $M:E \to F$ such that $\forall ~a,b \in E$, we have $M (a+b) = M (a) + M (b)$, $~$ $M(a \times b) = M (a) \times M (b)$ and $M (a \wedge b) = M (a) \wedge M (b)$. We obtain a category.
\end{defi}

\begin{exam} Note that $\I,+,\times , \wedge$ is an elevation structure. Similarly, $\mathscr{F},+, \times , \wedge$ is an elevation structure with $(f \wedge g )(n) := f(n) \wedge g(n)$.
 Let $a \in \I$, then the evaluation map $E_a:\mathscr{F} \to \I$, $f \mapsto f(a)$, is a morphism of elevation structures from  $\mathscr{F} ,+, \times , \wedge $ to $ \I, +,\times , \wedge$.
\end{exam}

Let $P (\mathscr{F}) $ be the set of all parts of  $\mathscr{F}$. Let $\mathscr{F}_s \subset \mathscr{F}$ be the part of functions constitued of the identity map $n \mapsto n$ and the constant maps $n \mapsto a$ for every $a \in \I$, we also call it the part of symbols.
We now define some operators on $P(\mathscr{F} )$.

\begin{defi} $~~$

\begin{enumerate}
\item Let $A_+$ be the map sending $\mathscr{P} \in P(\mathscr{F} ) $ to \[ \mathscr{P} \cup \{ f \in \mathscr{F} \mid \exists g\in P, \exists h \in P  \text{ and } f=g+h \}  .\]
\item Let $A_{\times}$ be the map sending $\mathscr{P}  \in P(\mathscr{F} ) $ to \[\mathscr{P}  \cup  \{ f \in \mathscr{F} \mid \exists g\in P, \exists h \in P  \text{ and } f=g \times h \} .\]
\item Let $A_{\wedge}$ be the map sending $\mathscr{P}  \in P(\mathscr{F} ) $ to \[ \mathscr{P}  \cup \{ f \in \mathscr{F} \mid \exists g\in P, \exists h \in P  \text{ and } f=g \wedge h \} .\]
\end{enumerate}

\end{defi}

We are now able to define the set of natural functions $\mathscr{F} _{Natural}$. Let $\Sigma$ be the set of all finite words with letters $A_+ , A_{\times} , A _{\wedge}$.

\begin{defi} \label{natfct} We put $\mathscr{F} _{Natural} = \displaystyle \bigcup _{\sigma \in \Sigma } \sigma ( \mathscr{F}_s)$, this is an element in $P(\mathscr{F} ) $. The $4$-uple $\mathscr{F} _{Natural} , + , \times , \wedge $ is an elevation structure.

\end{defi}

\begin{exam}The inclusion $\mathscr{F}_s \subset \mathscr{F} _{Natural}$ holds.
 The set of polynomial functions on $\I$ $\mathscr{F} _{Polynomial}$ is contained in $\mathscr{F} _{Natural}$. More precisely it is contained in $ \displaystyle \bigcup _{j,k \geq 0} A_+^j A_{\times} ^k (\mathscr{F} _s )$. 
 Fermat's function $n \mapsto 2^{2^n} +1 $ is contained in $\mathscr{F} _{Natural}$. More precisely it is contained in $A_+ A_{\wedge} ^2 ( \mathscr{F} _s )$.
 The functions $
n \mapsto n^n+n+1 ,
n \mapsto 7^n+6 ,
n  \mapsto  n^{4n^n +n^{23}+2^n+8} +n+3,
n  \mapsto 2^{2^{2^{2^{2^{2^n}}}}}+1 $
are natural functions.

\end{exam}

\begin{prop} \label{increa} A natural function $f \in \mathscr{F} _{Natural}$ is constant or strictly increasing.

\end{prop}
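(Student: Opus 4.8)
The plan is to prove the statement by induction on the structure of natural functions, using the fact that every $f\in\mathscr{F}_{Natural}$ lies in $\sigma(\mathscr{F}_s)$ for some word $\sigma\in\Sigma$. Call a subset $\mathscr{P}\subset\mathscr{F}$ \emph{good} if every $f\in\mathscr{P}$ is constant or strictly increasing. First I would check that $\mathscr{F}_s$ is good: the identity $n\mapsto n$ is strictly increasing and each map $n\mapsto a$ is constant. Then I would show that each of the operators $A_+,A_\times,A_\wedge$ sends good sets to good sets. Granting this, an immediate induction on the length $|\sigma|$ gives that $\sigma(\mathscr{F}_s)$ is good for every $\sigma\in\Sigma$: writing $\sigma=X\tau$ with $X$ a single letter, we have $\sigma(\mathscr{F}_s)=X(\tau(\mathscr{F}_s))$, and $\tau(\mathscr{F}_s)$ is good by the inductive hypothesis, so $X(\tau(\mathscr{F}_s))$ is good as well. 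Hence $\mathscr{F}_{Natural}=\bigcup_{\sigma\in\Sigma}\sigma(\mathscr{F}_s)$ is good, which is exactly the assertion. Throughout, the decisive structural feature is that every function takes values in $\I$, i.e. values $\geq 1$.

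The stability of goodness under each operator reduces, since $A_\bullet(\mathscr{P})=\mathscr{P}\cup\{g\bullet h\mid g,h\in\mathscr{P}\}$ leaves the old elements untouched, to three closure statements about two functions $g,h$ that are each constant or strictly increasing. For $+$: as neither $g$ nor $h$ is decreasing, $g+h$ is non-decreasing, it is strictly increasing as soon as one of $g,h$ is, and if both are constant so is $g+h$. For $\times$, positivity enters: if both are constant so is $gh$; otherwise, say $g$ is strictly increasing, and for $m<n$ one has $g(m)<g(n)$ and $1\le h(m)\le h(n)$, whence $g(m)h(m)<g(n)h(m)\le g(n)h(n)$, where the first inequality is strict precisely because $h(m)\ge 1$. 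Thus $gh$ is strictly increasing, the case of $h$ strictly increasing being symmetric.

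The exponentiation operator $A_\wedge$ is where the real care is needed and is the main obstacle, because it is the only place where a non-constant input can produce a constant output. Writing $(g\wedge h)(n)=g(n)^{h(n)}$, I would split into cases according to which of $g,h$ is constant. If $g\equiv 1$ then $g\wedge h\equiv 1$ is constant regardless of $h$; this is precisely why the conclusion must allow ``constant'' and not merely ``strictly increasing.'' If $g\equiv a$ with $a\ge 2$ and $h$ is strictly increasing, then $a^{h(n)}$ is strictly increasing. If $h\equiv b$ is constant and $g$ is strictly increasing, then $g(n)^b$ is strictly increasing since $x\mapsto x^b$ is strictly increasing on $[1,\infty)$ for $b\ge 1$. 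Finally, if both are strictly increasing, the key point is that $g$ takes integer values, so $m<n$ forces $g(n)\ge g(m)+1\ge 2$; then $g(m)^{h(m)}<g(n)^{h(m)}\le g(n)^{h(n)}$, where the first inequality uses $g(n)>g(m)\ge 1$ with fixed exponent $h(m)\ge 1$, and the second uses base $g(n)\ge 2$ with $h(n)>h(m)$. Collecting the cases, $g\wedge h$ is constant or strictly increasing, which closes the induction.
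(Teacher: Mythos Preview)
Your proof is correct and follows essentially the same structural induction as the paper's, which organizes the identical case analysis into three short lemmas and phrases the induction via a ``length'' invariant on natural functions rather than on the word $\sigma$; in particular, both arguments isolate $g\equiv 1$ in $g\wedge h$ as the one place a non-constant building block can yield a constant output. One trivial case slipped through your $\wedge$ discussion, namely $g\equiv a\ge 2$ with $h$ also constant, but there $g\wedge h$ is obviously constant, so nothing is really missing.
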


\begin{proof} By definition a function $f $ is in $\mathscr{F} _{Natural}$ is and only if there exists a finite word $\sigma$ in letters $A_+ , A_{\times} , A_{\wedge}$ such that $f \in \sigma (\mathscr{F} _s )$.  We need the following definition.

\begin{defi} The length of a natural function $f \in \mathscr{F}_{Natural}$ is the number \[ \min \{n \in \N \mid  f \in \sigma (\mathscr{F}_s) \text{ with $\sigma$ a word of length $n$} \}, \] it is well defined.

\end{defi} Let us prove our result by induction on length. Consider the following assertion.
\begin{center}

 \textit{$P_n :$ Proposition \ref{increa} is true for every $f \in \mathscr{F}_{Natural}$ of length $\leq n$.}
\end{center}
 
 Let us prove by induction that $P_n$ is true for every $n \in \N$. We need the following Lemmas.
 
  \begin{lemm} \label{lem0} Let $h,g \in \mathscr{F}_{Natural}$ be strictly increasing, then $h+g , h\times g , h \wedge g  $ are strictly increasing.
\end{lemm} 
\begin{proof}
Let $i>j$ be integers in $\I$, then $h(i)>h(j)$ and $g(i)> g (j)$. So $h(i)+g(i) > h(j)+g(j)$, consequently $h+g$ is strictly increasing. Since $ g(i) > g(j) \geq 1 $ and $h(i) > h(j) \geq 1$, we have $h(i)\times g(i) > h(j)\times g(j)$ and consequently $h\times g $ is strictly increasing. We also have $h(i)^{g(i)} > h(j) ^{g(j)}$ and consequently $h \wedge g$ is strictly increasing.
\end{proof} 
 
  \begin{lemm} \label{lem1} Let $h \in \mathscr{F}_{Natural}$ be strictly increasing and $g  \in \mathscr{F} _{Natural}$ be constant, then $h+g , h\times g , h \wedge g  $ are strictly increasing.
\end{lemm} 
\begin{proof}
Let $i>j$ be integers in $\I$, then $h(i)>h(j)$ and $g(i)= g (j)$. So $h(i)+g(i) > h(j)+g(j)$, consequently $h+g$ is strictly increasing. Since $g(i) = g(j) \geq 1$ and $h(i) > h(j)$, we have $h(i)\times g(i) > h(j)\times g(j)$ and consequently $h\times g $ is strictly increasing. We have $h(i)^{g(i)} > h(j) ^{g(j)}$ and consequently $h \wedge g$ is strictly increasing.
\end{proof} 

\begin{lemm} \label{lem2} Let $h \in \mathscr{F}_{Natural}$ be strictly increasing and $g  \in \mathscr{F} _{Natural}$ be constant, then $g \wedge h $ is strictly increasing except if $g=1$. If $g=1$, $g \wedge h $ is constant.
\end{lemm}

\begin{proof} Let $i>j$ be integers in $\I$. If $g=1$ then $g(i) ^{ h(i)}=g(j) ^{ h(j)}=1$ and $g \wedge h$ is constant. In the other case $g(i)=g(j) >1$ and $h(i)>h(j)$, consequently $g(i) ^ {h(i)} > g (j) ^{ h(j)}$. This finishes the proof.
\end{proof}
 
 Now let us start our induction. If $n=0$, then $f$ is constant or the identity map and thus satisfies $P_0$. Now assume that $P_n$ is true and let us prove that $P_{n+1}$ is true. Let $f $ of length $n+1$.  By definition, there are $h,g $ of length $\leq n$ such that $f=h+g$ or $f=h \times g $ or $f = g \wedge h$ or $f= h \wedge g$. By induction hypothesis $h$ and $g$ are constant or strictly increasing.  Assume first both are constant. Then $f$ is obviously constant. If both are strictly increasing, then $f$ is strictly increasing by Lemma \ref{lem0}. If $h$ is strictly increasing and $g$ is constant, then by Lemma \ref{lem1}, $h+g$ , $h \times g$ and $h \wedge g$ are strictly increasing, and by Lemma \ref{lem2}, $g \wedge h$ is strictly increasing or constant. If $h$ is constant and $g$ is strictly increasing, we deduce, in the same way, that $h+g$ , $h \times g$, $h \wedge g$ and $g \wedge h$ are strictly increasing or constant. Consequently, $f$ is constant or strictly increasing. This finishes the induction and the proof of Proposition.

\end{proof}

\begin{rema} The map $\N \to \N$, $n \mapsto n^n$ is neither constant nor strictly increasing since $0^0=1^1\neq 2^2$. Its restriction to $\I$ is a strictly increasing natural function.

\end{rema}

\section{Statement of the conjecture}

\subsection{Statement}
We now state our conjecture.

\begin{conj} \label{conj}
 Let $f \in \mathscr{F}_{Natural} $ be a non-constant natural function. 
Then \[f( \I) \not \subset \pp.\]
\end{conj}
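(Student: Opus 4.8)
The plan is to attack the conjecture by congruences, upgrading the classical argument that no non-constant polynomial in $\Z[x]$ takes only prime values so that it tolerates the operation $\wedge$. The engine is a reduction I would prove first: for every $g \in \mathscr{F}_{Natural}$ and every integer $m \geq 1$, the sequence $\bigl(g(n)\bmod m\bigr)_{n\in\I}$ is eventually periodic. This goes by induction on the length of $g$. The symbols $n\mapsto n$ and $n\mapsto a$ are periodic modulo $m$, and eventually periodic sequences are closed under $+$ and $\times$, which handles those operations. For $g\wedge h$ I would invoke the generalized Euler theorem, $a^{e}\equiv a^{\varphi(m)+(e \bmod \varphi(m))}\pmod m$ valid once $e\geq\log_2 m$: by Proposition \ref{increa} the exponent $h$ is constant or strictly increasing, so in the unbounded case $h(n)\geq\log_2 m$ for large $n$ and $(g\wedge h)(n)\bmod m$ becomes a fixed function of the pair $\bigl(g(n)\bmod m,\ h(n)\bmod\varphi(m)\bigr)$; both coordinates are eventually periodic by induction (the second applied with modulus $\varphi(m)$), hence so is their common refinement and so is $(g\wedge h)(n)\bmod m$. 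The constant-exponent case is immediate.

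Granting this, I would argue by contradiction: suppose $f(\I)\subseteq\pp$. As $f$ is strictly increasing (Proposition \ref{increa}), $f(n)\to\infty$. The decisive observation is that if some prime $p$ has $f(n)\equiv 0\pmod p$ for one index $n$ already inside the periodic regime modulo $p$, say $n\geq N(p)$ with period $T(p)$, then $f(n+kT(p))\equiv 0\pmod p$ for all $k\geq 0$; since these values tend to infinity they eventually exceed $p$ while staying divisible by $p$, hence are composite, contradicting $f(\I)\subseteq\pp$. The entire proof therefore collapses to a single existence statement: find one prime whose residue cycle of $f$ contains $0$, equivalently an index $a$ with $f(a)$ prime and $a\geq N\bigl(f(a)\bigr)$.

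This last existence is exactly where I expect the proof to stall, and it is presumably the reason the statement remains a conjecture. The pre-period $N(m)$ of the first step grows with $m$: for nested exponentials the transients pile up along the tower of totients $m,\varphi(m),\varphi(\varphi(m)),\ldots$ that appears when the nested exponents are reduced in turn, so $N(p)$ is controlled by the index at which the \emph{inner} exponents of $f$ overtake $\log_2 p$. Taking the modulus to be the prime value itself, $p=f(a)$, is then self-defeating for genuinely iterated exponentials: there $\log_2 f(a)$ is of the order of the outer exponent of $f$ at $a$, whereas the inner exponents reach that size only much later, forcing $N\bigl(f(a)\bigr)\gg a$, so the divisibility $p\mid f(a)$ lies in the transient and need not recur. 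No smaller fixed prime rescues the argument either, because under the hypothesis $f(\I)\subseteq\pp$ the value $f(a)$ is its own only prime factor. Thus congruences settle polynomials and single slowly-growing exponentials but leave towers untouched, the same gap that keeps open whether infinitely many Fermat numbers $2^{2^{n}}+1$ are composite; clearing it would seem to need a genuinely new ingredient, perhaps an algebraic factorisation forced by the elevation-structure axioms rather than a purely arithmetic periodicity.
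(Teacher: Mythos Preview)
The statement you are addressing is not a theorem in the paper but its central open conjecture; the paper offers no proof, only partial evidence (Proposition~\ref{evidence}) covering polynomials, single exponentials $a^n+b$, Fermat's function via Euler's explicit factorisation of $2^{2^5}+1$, and a list of functions checked numerically. There is therefore no proof in the paper to compare your attempt against.

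Your own diagnosis of your approach is accurate. The eventual-periodicity reduction in your first step is sound and a genuinely useful observation; it cleanly recovers cases (1) and (2) of Proposition~\ref{evidence}, where the pre-period modulo $p$ is bounded independently of $p$. But the contradiction closes only if some index $a$ already lies in the periodic regime modulo the prime $f(a)$, and you correctly identify that for iterated exponentials the pre-period $N\bigl(f(a)\bigr)$ outruns $a$, so the trivial divisibility $f(a)\equiv 0\pmod{f(a)}$ sits in the transient and need not propagate. This is not a repairable technicality: the paper itself notes (Corollary~\ref{Fermatcor}) that the conjecture would imply infinitely many composite Fermat numbers, an open problem, so any complete argument must overcome an obstruction of at least that strength. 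Your proposal does not, and you say so; what you have written is a clear account of why congruence methods alone are insufficient, not a proof.
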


\begin{prop} \label{evidence} Conjecture  \ref{conj} is true in the following cases.

\begin{enumerate}
\item Let $f \in \mathscr{F} _{Polynomial}$, assume $f$ is non-constant, then $f (\I) \not \subset \pp$.
\item Let $a \in \I _{>1}$, $b \in \N$ and $f  $ be $n \mapsto a^n +b$, then $f (\I ) \not \subset \pp$.
\item Let $f$ be $n \mapsto 2^{2^n}+1$, the Fermat function. Then $f (\I) \not \subset \pp$.
\item Let $f$ be a natural function considered in the experiment (§\ref{secexp}). Then $f (\I) \not \subset \pp$.
\end{enumerate}
\end{prop}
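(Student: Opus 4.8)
The plan is to treat the four cases separately but under one common philosophy: since $f(\I) \not\subset \pp$ only requires exhibiting a single value of $f$ that fails to be prime, in each case I would produce an explicit $n$ with $f(n)$ composite. For the two infinite families (1) and (2) the mechanism is identical: locate a prime $p$ dividing one value $f(n_0)$, exploit periodicity modulo $p$ to manufacture arbitrarily large values of $f$ divisible by $p$, and finally invoke the strict monotonicity of Proposition \ref{increa} to guarantee these values strictly exceed $p$, so that they are multiples of $p$ larger than $p$ and hence composite.

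For case (1), let $f$ be a non-constant polynomial; by the way $\mathscr{F}_{Polynomial}$ is built from $\mathscr{F}_s$ using $A_+$ and $A_\times$, its coefficients are positive integers. I would pick $n_0 \in \I$ with $f(n_0) \geq 2$; if $f(n_0)$ is not prime we are done, so assume $f(n_0) = p \in \pp$. Since $f$ has integer coefficients and $n_0 + kp \equiv n_0 \pmod p$, we get $f(n_0 + kp) \equiv f(n_0) \equiv 0 \pmod p$ for every $k \geq 1$. By Proposition \ref{increa}, $f$ is strictly increasing, so $f(n_0 + kp) > f(n_0) = p$, and being a multiple of $p$ strictly larger than $p$ it is composite.

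For case (2), write $f(n) = a^n + b$ with $a \geq 2$ and $b \geq 0$. If $b = 0$ then $f(2) = a^2 = a \times a$ is composite. Otherwise set $p = f(1) = a + b \geq 3$; if $p$ is composite we are done, so assume $p \in \pp$. If $p \mid a$ then $p \mid b$ as well, whence $p \mid f(n)$ for every $n \geq 1$ and any large $n$ gives a composite value by monotonicity. If instead $\gcd(a,p) = 1$, Fermat's little theorem gives $a^{1 + k(p-1)} \equiv a \pmod p$, so $f(1 + k(p-1)) \equiv a + b = p \equiv 0 \pmod p$; taking $k \geq 1$ and applying Proposition \ref{increa} again produces a multiple of $p$ exceeding $p$, hence composite.

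Cases (3) and (4) are settled by explicit computation rather than by structure. For the Fermat function it suffices to recall Euler's factorization $2^{2^5} + 1 = 641 \times 6700417$, so that $f(5)$ is composite. For case (4), each natural function appearing in §\ref{secexp} is exhibited there with a concrete composite value in its image, verified by direct computation. I expect the main obstacle to be that none of these arguments extends to the general statement: for a tower of exponentials the ``period modulo $p$'' is governed by iterated multiplicative orders that we cannot control, and it is precisely this failure of a uniform congruence obstruction that makes Conjecture \ref{conj} hard. Even within the proposition, the most delicate point is case (2), where one must isolate the branch $\gcd(a,p) > 1$ and confirm in every branch that the manufactured value is strictly larger than $p$.
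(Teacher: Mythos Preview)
Your proof is correct and follows essentially the same route as the paper's: in cases (1) and (2) you pick a prime divisor of an early value, use periodicity modulo $p$ (via polynomial congruence or Fermat's little theorem) to produce further multiples of $p$, and invoke strict monotonicity from Proposition~\ref{increa} to force those multiples above $p$; cases (3) and (4) are handled by Euler's factorization and the experimental table, exactly as in the paper. The only cosmetic differences are that the paper does not split off the subcase $b=0$ in (2) (it starts at $n\geq 2$ instead of $n=1$) and does not separate ``$f(n_0)$ already composite'' in (1), but the substance is identical.
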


\begin{proof} \begin{enumerate} \item Since $f$ is non-constant, it is strictly increasing by \ref{increa}, so there exits $n \in \I$ such that $f(n) >1$. Let $p \in \pp$ such that $p \mid f(n)$. Then $f(n) =0 \mod p$. Since $f$ is polynomial, we have $f(n+p) = f(n) \mod p.$ So we have $ p \mid f(n) , p \mid f(n+p) ,1< f(n) < f(n+p) .$ This implies that $f(n+p)$ is not a prime number. So $f(\I) \not \subset \pp$.
\item Let $n\in \I _{>1}$. Then $f(n)=a^n +b >1$.  Let $p\in \pp$ such that $p \mid f(n)$. Then $a^n+b =0 ~ \mod p. $ Assume first that $p \nmid a$, then by Fermat's little theorem $a^{n+ (p-1)} +b = a^n +b =0 \mod p.$ So we have $p \mid f(n),  p \mid f (n+(p-1)) ,  1 < f(n) < f(n+(p-1)) .$ This implies that $f (n+p-1) $ is not a prime number. Now if $p \mid a$, then $p < f(n)$ and $f(n)$ is not a prime number. So we have proved that $f(\I) \not \subset \pp$.
\item  We have $f(5)=2^{2^5}+1 =4294967297=641 \times 6700417$, as Euler \cite{euler1, euler2} computed. So $f(\I) \not \subset \pp$.
\item See the experiment.
\end{enumerate}
\end{proof}

\subsection{Reformulation: $\pp$ is supernatural} 

An infinite subset of $\I$ is said to be natural if it is of the form $f(\I)$ where $f\in \mathscr{F}_{Natural}$ is non-constant. An infinite subset of $\I$ is said to be supernatural if it does not contain any natural subset of $\I$. Our conjecture is now equivalent to the following statement: $\pp$ is supernatural.

\section{Implications and remarks}

In this section we discuss implications of our conjecture and comment it.

\begin{prop} \label{infini} Assume Conjecture \ref{conj} is true. Let $f \in \mathscr{F} _{Natural}$ non-constant, then $\{ x \in f (\I)  \mid x \text{ is not a prime number} \}$ is infinite.

\end{prop}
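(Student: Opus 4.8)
The plan is to argue by contradiction, extracting from the assumed finiteness of the exceptional set a new non-constant natural function whose \emph{entire} image consists of primes, in violation of Conjecture~\ref{conj}. So suppose, for contradiction, that
\[ S := \{\, x \in f(\I) \mid x \text{ is not a prime number} \,\} \]
is finite. Since $f$ is non-constant, Proposition~\ref{increa} shows that $f$ is strictly increasing, hence injective, and $f(\I)$ is infinite. Consequently only finitely many $n \in \I$ satisfy $f(n) \in S$, and we may fix $N \in \I$ large enough that $f(n) \in \pp$ for every $n \geq N$.

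First I would reduce the whole argument to a single closure property of $\mathscr{F}_{Natural}$: for every $N \in \I$ and every non-constant $\varphi \in \mathscr{F}_{Natural}$, the shifted function $n \mapsto \varphi(n+N)$ again lies in $\mathscr{F}_{Natural}$. Granting this for $\varphi = f$, the proof concludes quickly. The map $g \colon n \mapsto f(n+N)$ is then natural; it is strictly increasing, being $f$ post-composed with the strictly increasing map $n \mapsto n+N$, and so is non-constant; and for every $n \in \I$ we have $n + N \geq N$, whence $g(n) = f(n+N) \in \pp$, i.e.\ $g(\I) \subset \pp$. This contradicts Conjecture~\ref{conj} applied to the non-constant natural function $g$. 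The contradiction shows that $S$ is infinite, which is exactly the assertion.

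It remains to establish the closure property, and this is the step I expect to be the main obstacle, since it is the only place where the combinatorial definition of $\mathscr{F}_{Natural}$ is genuinely used. I would prove that $n \mapsto \varphi(n+N)$ is natural for every natural $\varphi$ by induction on the length of $\varphi$, in parallel with the proof of Proposition~\ref{increa}. In the base case a symbol is either a constant, which is unchanged by the shift and hence still natural, or the identity, whose shift is $n \mapsto n+N \in A_+(\mathscr{F}_s) \subset \mathscr{F}_{Natural}$. In the inductive step, write $\varphi = u \star v$ with $\star \in \{+, \times, \wedge\}$ and $u,v$ of smaller length; since evaluation at $n+N$ is performed pointwise, $\varphi(n+N) = u(n+N) \star v(n+N)$, so the shift of $\varphi$ is the same operation $\star$ applied to the shifts of $u$ and $v$. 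By the induction hypothesis these shifts are natural, and the operators $A_+, A_{\times}, A_{\wedge}$ preserve naturalness, so the shift of $\varphi$ is natural as well. The care required here is purely formal: one must verify that replacing every occurrence of the symbol $n$ in a word $\sigma$ realizing $\varphi$ by the natural function $n+N$ does not disturb the word structure, which is precisely what the pointwise compatibility of $+, \times, \wedge$ guarantees.
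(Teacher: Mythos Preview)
Your argument is correct and follows essentially the same approach as the paper: both proofs reduce the statement to the single closure property that the shifted function $n \mapsto f(n+k)$ is again natural, and then apply Conjecture~\ref{conj} to that shift. The paper phrases this directly (for each $k$ produce some $q>k$ with $f(q)$ composite) rather than by contradiction, and simply asserts that the shift is natural; your inductive justification of that closure property fills in a detail the paper leaves implicit.
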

\begin{proof}By \ref{increa}, $f$ is strictly increasing. It is enough to prove the following statement. \textit{For all $k \in \I$, there exists $q\in \N _{>k}$, such that $f(q)$ is not a prime number. }
So let $k \in \I$. Let us consider the function $g \in \mathscr{F}  $ defined by $g (n) = f(n+k)$. The function $g$ is natural. So there exists $d \in \I$ such that $g(d)$ is not a prime number. So $q:= d+k$ is such that $q > k$ and $f(q)$ is not a prime number. This finishes the proof.

\end{proof}

\begin{coro} \label{Fermatcor}Assume Conjecture \ref{conj} is true. Then there are infinitely many composite Fermat's numbers.
\end{coro}

\begin{proof}Apply \ref{infini} to Fermat's natural function $n \mapsto 2^{2^n}+1$.

\end{proof}

\begin{prop} \label{incredi} Assume Conjecture \ref{conj} is wrong. Then there exists an "arithmetical" (relying only on $+,\times ,  \wedge$) formula giving arbitrary big prime numbers.

\end{prop}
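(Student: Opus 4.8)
The plan is to negate the conjecture and simply read off the resulting object as the desired formula. Assuming Conjecture \ref{conj} is false, there exists by definition a non-constant natural function $f \in \mathscr{F}_{Natural}$ with $f(\I) \subset \pp$. First I would invoke Proposition \ref{increa}: since $f$ is non-constant, it is strictly increasing. A strictly increasing map $f : \I \to \I$ satisfies $f(n) \geq n$ for all $n \in \I$, by an immediate induction, since $f(1) \geq 1$ and the strict inequality $f(n+1) > f(n)$ between integers forces $f(n+1) \geq f(n) + 1$. Hence $f(n) \to \infty$, so the values $f(1), f(2), f(3), \ldots$ are prime numbers that grow without bound.

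It then remains to observe that $f$ itself \emph{is} the sought-after arithmetical formula. Indeed, by Definition \ref{natfct} every natural function lies in $\sigma(\mathscr{F}_s)$ for some finite word $\sigma$ in the operators $A_+, A_{\times}, A_{\wedge}$; unwinding this description means precisely that $f$ is obtained from the symbol $n$ and constants $a \in \I$ by finitely many applications of $+, \times, \wedge$. Thus $f$ is a closed expression relying only on these three operations, and evaluating it at successive integers produces arbitrarily large primes, as required.

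I expect no serious obstacle here: the statement is essentially a tautological reformulation of the negation of the conjecture, combined with the monotonicity already established in Proposition \ref{increa}. The only point deserving a word of care is the passage from \emph{strictly increasing} to \emph{unbounded}, which is why I would make the bound $f(n) \geq n$ explicit rather than leave it implicit; everything else is unwinding the definition of $\mathscr{F}_{Natural}$.
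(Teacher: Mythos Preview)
Your proposal is correct and follows exactly the paper's approach: negate the conjecture, invoke Proposition \ref{increa} to get strict monotonicity, and observe that the resulting natural function is the desired arithmetical formula. You have simply made explicit (via $f(n)\geq n$ and the unwinding of Definition \ref{natfct}) what the paper leaves as a one-line remark.
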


\begin{proof}If Conjecture \ref{conj} is wrong. There exists a non-constant (and thus strictly increasing) natural function giving only prime numbers.  
\end{proof}

\begin{rema} In order to define natural functions in Definition \ref{natfct}, we have used $+,\times , \wedge$. One can define with the same formalism a larger class of function using moreover Knuth's up arrow $\{ \uparrow ^n \mid n \geq 1\}$ notation for hyperoperation $(\uparrow ^1 = \wedge )$. Then one can extend Conjecture \ref{conj} to this larger class of functions (i.e. the set of prime numbers is "super-Knuth"). One may also consider to use the factorial symbol. 
\end{rema}

\section{Experimental Results}

\label{secexp}
We checked Conjecture \ref{conj} on many non-polynomial functions using CoCalc. 
When we take a random natural function $(e.g.~ 3^{3^n}+1)$, obviously, the first value ($n=1$) is already very often composite.
In the following, we experiment some (non-polynomial) functions whose first outputs are prime numbers. For each function $f$, we compute the smallest $n\geq 1$ such that $f(n)$ is not prime. We also provide the factorization of $f(n)$.
\[
 \includegraphics[scale=0.50]{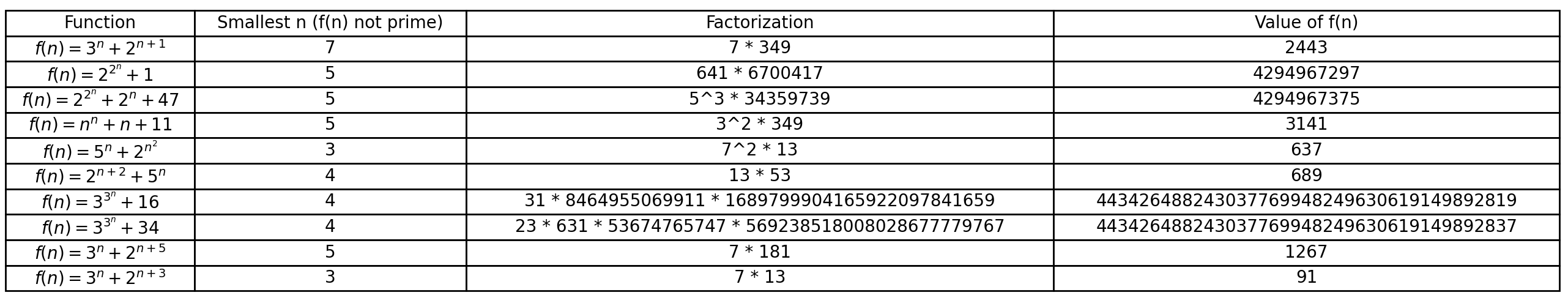}
\]

To us, Fermat numbers do not have something very special. Some other similar natural functions seem to behave similarly. The function $f_{46} : n  \mapsto 2^{2^n}+ 93$ is better than Fermat's function at giving prime numbers (it gives prime numbers for $1 \leq n <7$): 

\[
 \includegraphics[scale=0.6]{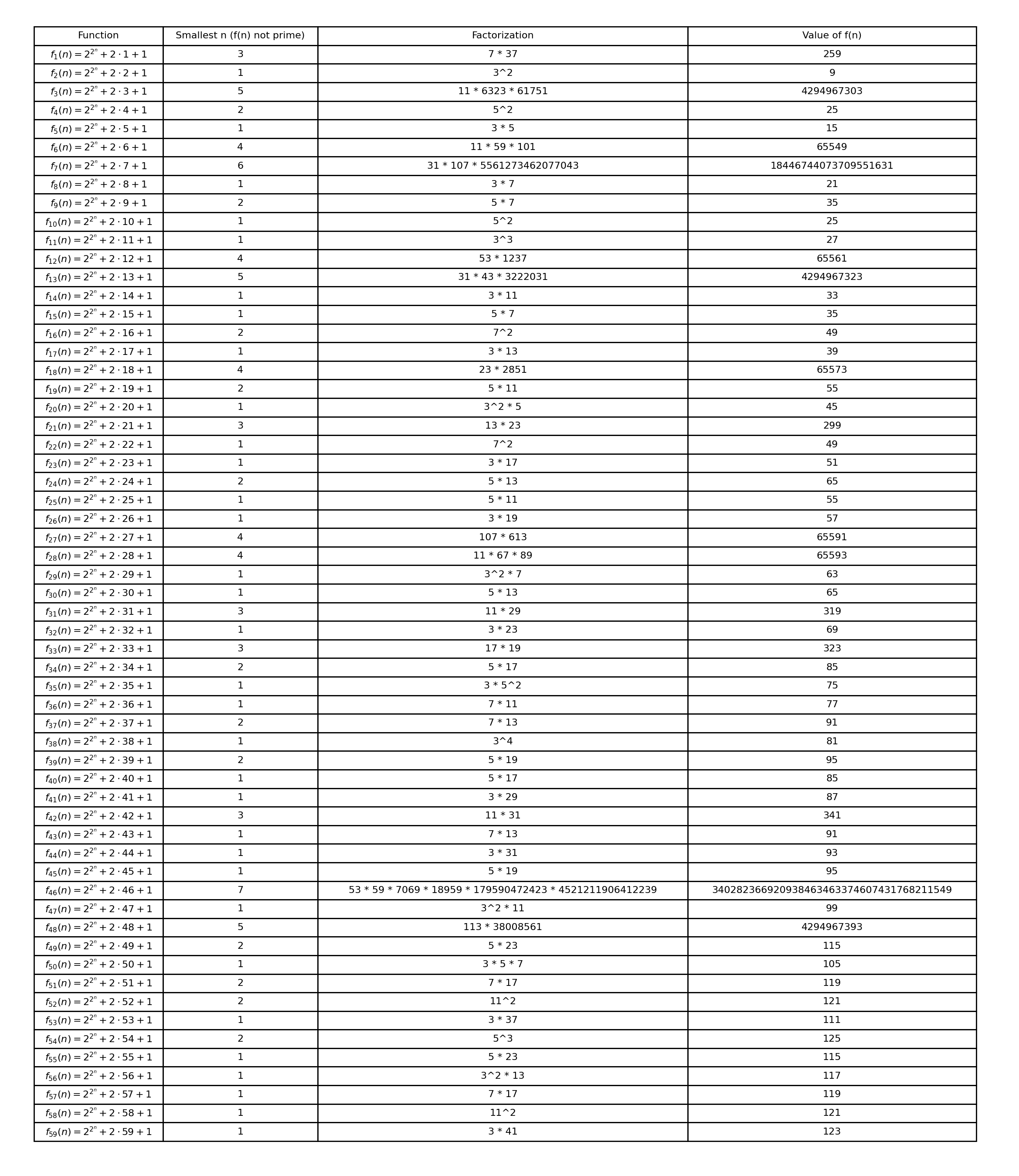}
\]
We tested other functions $n\mapsto 2^{2^n}+2k+1$ for $0\leq k \leq 1300$. Having $f(1), \ldots , f(4)$ primes is relatively frequent. We found another one such that $f(n) $ is prime for all $1\leq n <7$: 
\[
 \includegraphics[scale=0.51]{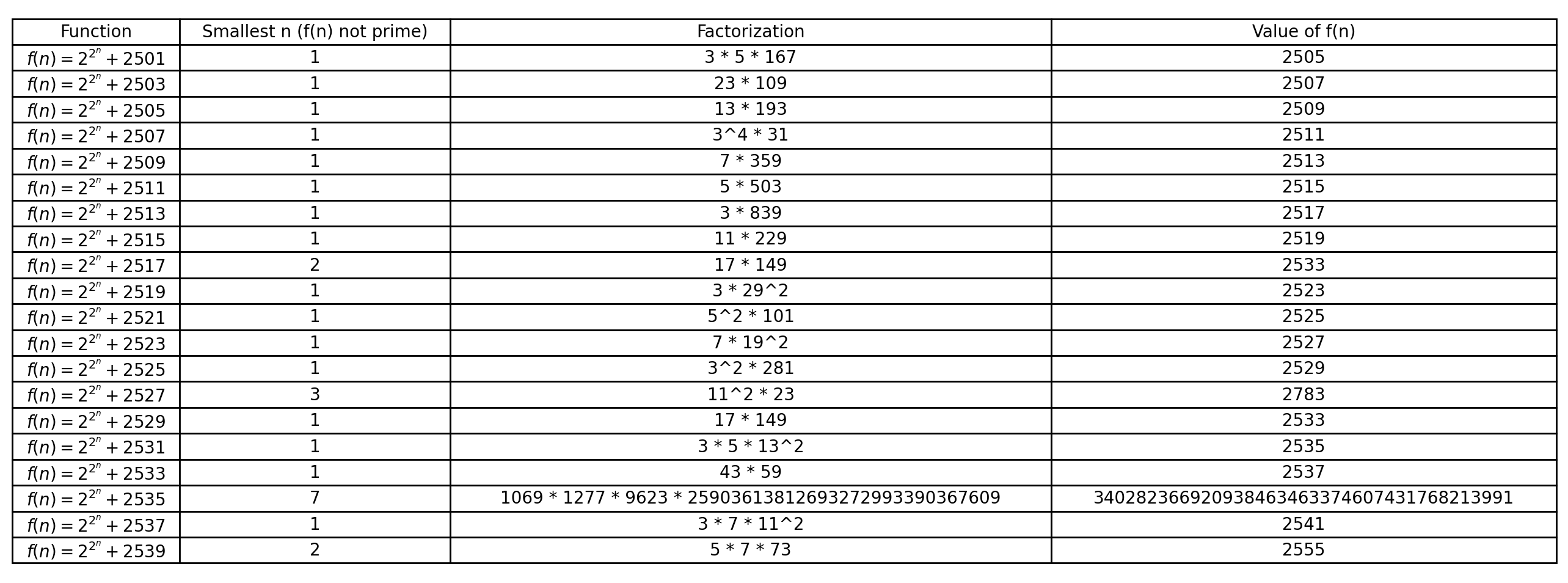}
\]

All these computations provide evidence supporting the conjecture. We conclude this note with the following questions:

\begin{enumerate}
    \item For each \( m > 0 \), does there exist a non-polynomial (resp. double exponential) function  whose first \( m \) outputs are prime numbers?
    \item Is there a canonical form for natural functions, analogous to the unique representation of polynomials as sums of monomials?
\end{enumerate}

We plan to continue these experiments and study these questions elsewhere.


\begin{thebibliography}{99999999}

\bibitem[Euler1]{euler1} L. Euler: {\it Observationes de theoremate quodam Fermatiano aliisque ad numeros primos spectantibus,} 1732

\bibitem[Euler2]{euler2} L. Euler: {\it Theoremata circa divisors numerorum, } 1747   
 \bibitem[Fermat]{fermat} P. Fermat: {\it \OE uvres de {P}ierre {F}ermat. {I}} Librairie Scientifique et Technique Albert Blanchard, Paris 1999
 

 
     
\end{thebibliography}
\end{document}